\newtheorem{theorem}{Theorem}
\newtheorem{lemma}[theorem]{Lemma}
\newtheorem{proposition}[theorem]{Proposition}
\newtheorem{remark}[theorem]{Remark}
\title[A nonautonomous superlinear problem]{Existence and classifiction of radial solutions of a  nonlinear nonautonomous Dirichlet problem}
\date{}
\author[M. Rouaki]{Mohamed Rouaki}
\address{Department of Mathematics, University of Blida, Algeria.}
\email{rouakimd@yahoo.fr}
\begin{document}
\begin{abstract}
This paper generalizes a classification of solutions of a superlinear Dirichlet problem given in \cite{rouaki2} to a nonautonomous case.
In \cite{rouaki1} the increasing of $f(t)$ was used to prove the classification and in \cite{rouaki2} the unicity of the solution of the \emph{Cauchy} problem was used. Here the classification appears as a consequence of the \emph{a priori} estimates. It results that existence classificarion remain true for a class of nonautonomous problems.
\end{abstract}

\maketitle
\section{Introduction}
We are interested by radial solutions of the nonautonomous problem
\begin{equation}
	-\Delta u = g(u) - \lambda -f(x), \textrm{ on } \Omega \textrm{ and } u=0 \textrm{ on } \partial \Omega
\end{equation}
where $\Omega$ denotes the unit ball in $\mathbf{R}^n$, $\lambda >0$, $f$ is a $C^1$ radial function on $\Omega$. $g\in C^{0,\alpha} (\mathbf{R},\mathbf{R})$ and there exists $A>0$ such
that $g_+=g \big| _{[A,\infty [ }$\ is positive, increasing,
differentiable and convex, $g_{-}=g \big| _{]-\infty ,-A]}$\ is positive, convex
and decreasing. In addition 
\begin{equation}
	\lim \frac{g(x)}{x} =\pm \infty ,\quad x \to \pm \infty
\end{equation}
\begin{equation}
	\lim \sqrt{\frac{R(x)}{x}}\ \frac{g^{-1} _+(x)}{g^{-1} _-(x)} =\pm \infty ,\quad x \to \pm \infty
\end{equation}

A classical problem of the existence of radial solutions still interesting in \emph{superlinear} case see \cite{dambrosio} and \cite{grossi}.

For the positone problem different methods have been used \cite{grossi}, and for the nonpositone problem, radial solutions have been considered using the shooting method \cite{dambrosio}\cite{castro}. Here we deal with the \emph{nonpositone} problem using the homotopy of the topological degree \cite{leray}.

P. L. Lions in \cite{lions} notes that many existence results of nodal solutions have been obtened but no classification of solutions have been given.

Remark that a classification of solutions set was introduced by \\ P.H. Rabinowitz \cite{rabinowitz} based on the number of zeros of the solution $u(t)$ to prove existence results for a semilinear Sturm-liouville problem. 

In this paper we use the homotopy of the topological degree and a classification of solutions based on the number of zeros of the second hand side of Eq.(1) \ $g(u(t))-\lambda -f(t) = 0$, $t\in \mathbf{R}$. This approch represents an alternative for the shooting method and have been used in  \cite{rouaki1}\cite{rouaki2}.

This paper generalize the existence result given in \cite{rouaki2} for a nonautonomous case. The main result is the Proposition(3) in which the classification of solutions set appears as a consequence of the \emph{a priori} estimates. Indeed in \cite{rouaki1} the classification was given by the increasing property of $f(t)$, see proof of Proposition(3) Eq.(2.19), and in \cite{rouaki2} the unicity of the solution of the \emph{Cauchy} problem was used, see Proposition(4) \cite{rouaki2}.

Remark that the topological method is not limited by \emph{critic Pohozaev-Sobolev exponent} but only by \emph{a priory} estimates. Hence, the existence result given in Theorem (1)  \cite{rouaki2} depends only on conditions (2) and (3) and stills valid for $\mathbf{R^n}$, $n\ge 1$. To our knowledge the most general existence results known at this time for nodal solutions of \emph{nonpositone} Elliptic problems are subject to the limite of \emph{critic Pohozaev-Sobolev exponent}.

A remarkable \emph{a priori} estimates for positive solutions of elliptic problems was given in \cite{figueiredo} and used to get existence result with the topological degree.

Here, properties of the \emph{nonpositone} problem and nodal solutions have been exploted to get  an \emph{a priori} estimates which is not limited by the \emph{critic Pohozaev-Sobolev exponent}.

The plane of the proof is similar to \cite{rouaki2} and most arguments of proofs remain true for (1). So we will give details just for the proof of Proposition(\ref{p_classification}) which generalizes Proposition(4) in \cite{rouaki2}.

\section{Existence and classification of solutions}
We consider the problem
\begin{equation}
	-u''(t)-\frac{n-1}{t}u'(t) =g(u(t))-\lambda -f(t)
\end{equation} 
	\[ u'(0) = 0, u(1)=0 
	\]
$u$ having a local minimum in \emph{zero}. This is a non autonomous problem related to (5) in \cite{rouaki2}. In addition suppose that $f\in C^{1}([0,1],\mathbf{R})$.

Recall that $\lambda >0, g\in C(\mathbf{R,R})$\ and there exists $A>0$ such
that $g_+=g \big| _{[A,\infty [ }$\ is positive, increasing,
differentiable and convex, $g_{-}=g \big| _{]-\infty ,-A]}$ \ is positive, convex
and decreasing. In addition 
\begin{equation}
	\lim \frac{g(x)}{x} =\pm \infty ,\quad x \to \pm \infty
\end{equation}

Let $k\in N,$\ $\lambda >A,$ $E=\{u\in C^{1}([0,1],\mathbf{R}): u'(0) \le 0, u(1)=0\}$ and $Z_{k}(\lambda )$ a subset of $E$ defined by 
	\[ Z_{k}(\lambda )=\left\{ u\in E:u(t)-g_+^{-1}(\lambda +f(t))\textrm{ has }k \textrm{ simple zeros in\ }[0,1]\right\}
	\]

We denote $M= \| f \| _{C^1}$.

\vspace{10pt}
The following proposition recalls the \emph{a priori} estimate given in proposition (2) in \cite{rouaki2}.

\paragraph{\textbf{Proposition.}}
There exist $C>0$ and $K(\lambda )$ a continuous function defined on 
$[C,\infty [ $ such that, for each solution $(u, \lambda )$ of
(1) satisfying $\lambda >C$ and $u'(0)\leq 0,$ we have 
$\| u \| <K(\lambda )$.

For a local maximum $\beta$
	\[	u(\beta )<2R(4(\lambda +M)), \quad R(x) = \max \{ |g^{-1}_- (x)|, |g^{-1}_+ (x)| \}
	\]
and for a local minimum $\alpha$
	\[	|u(\alpha)| \le R(\lambda + M )
	\]

The proof of the propostion is the same as proof of Proposition(2) in \cite{rouaki2}.

\paragraph{\textbf{Some general formulas}} \

--- The mean theorem gives 
	\[	\left| \int_{g_+^{-1}(\lambda +m_{1})}^{g_+^{-1}(\lambda
		+m_{2})}(g(u)-\lambda )du\right| \leq m_{2}\left| g_+^{-1}(\lambda
		+m_{2})-g_+^{-1}(\lambda +m_{1})\right|
	\]
and gives $\mu \in ]m_{1},m_{2}[$
	\[	g_+^{-1}(\lambda +m_{2})-g_+^{-1}(\lambda +m_{1}) = \frac{m_{2}-
		m_{1}}{g'(g_+^{-1}(\lambda +\mu ))}
	\]
\begin{equation}
\left| \int_{g_+^{-1}(\lambda +m_1)}^{g_+^{-1}(\lambda
+m_2)}(g(u)-\lambda )du\right| \le m_2 \left| \frac{m_2 - m_1}
{g'(g_+^{-1}(\lambda +\mu ))}\right|
\end{equation}

--- for $x > a$ large enough $g_+$ is convex then
	\[ g'(x) > \frac{g(x)-g(a)}{x-a} 
	\]
for $x$ large enough there exists $\gamma >0$ such that 
	\[	g'(x) > \gamma g(x)/x 
	\]
set $x= g^{-1}(\lambda +\mu)$ to get
\begin{equation}
\frac{1}{g^{\prime }(g_+^{-1}(\lambda +\mu ))}\to 0,\lambda
\to +\infty
\end{equation}

--- Let $a,b\in [ 0,1]$
\begin{eqnarray}
\int _a ^b f u' dt &=&(f(b)u(b)- f(a)u(a))+\int_{a}^{b} f' u dt  \nonumber \\
\left| \int_a ^b f u' dt \right| &\leq & 3M \max | u(t)|
	\quad \le 6M\,R(4(\lambda +M))
\end{eqnarray}

--- The concavity of $g_+^{-1}$ implies that for $x>\alpha$ large enough 
	\[	\frac{g_+^{-1}(x) - g_+^{-1}(\alpha )}{x-\alpha }
	\] 
is decreasing, then for $b>a>0$ and $\lambda $ large enough 
	\[	\frac{g_+^{-1}(b\lambda ) - g_+^{-1}(\alpha )}{b\lambda -\alpha } <
		\frac{g_+^{-1}(a\lambda ) - g_+^{-1}(\alpha )}{a\lambda -\alpha }
	\]
we deduce that there exists $\gamma > 0$ such that 
	
\begin{equation}
	g_+^{-1}(b\lambda ) < \gamma g_+^{-1}(a\lambda )
\end{equation}

\begin{remark}
Increasing of $g$ gives, for $\lambda $ large enough, $u>g_+^{-1}(\lambda + f)$ implies 
$g(u)-(\lambda + f)>0,$ and $u=g_+^{-1}(\lambda + f)$ implies 
$g(u)-(\lambda + f)=0,$ hence $0\leq u<g_+^{-1}(\lambda + f)$
implies $g(u)-(\lambda + f)<0$.

For $\beta $ a local maximum \quad $g(u(\beta)) - (\lambda +f(\beta)) \ge 0$, from which  \\
$g_+^{-1}(\lambda + f(\beta ))\leq u(\beta )$. (contrapositive of the last implication)

For $\alpha $ a positive local minimum $g_+^{-1}(\lambda + f(\alpha ))\geq u(\alpha )$.
\end{remark}

The following lemma genaralizes Lemma(5) in \cite{rouaki2} which is used in the following to prove Proposition(3).

Estimation of the derivative at zeros of \ $u(t)-g_+^{-1}(\lambda + f(t))$.
\begin{lemma}
There exists a sequence $(A_k)$ $(k\geq 1)$ of positive numbers such that,
for each solution $(u, \lambda )$ of (1) satisfying $u'(0)\leq 0$, 
$\lambda >A_{k}$ and \ $u-g_+^{-1}(\lambda + f)$ having at least \ $k$ zeros, 
there exist $B>0$ satisfying for the $k$ largest zeros 
	\[	|u'(\tau )|>B\sqrt{\lambda g_+^{-1}(\lambda /2)}
	\]
\end{lemma}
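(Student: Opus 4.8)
The plan is to run an energy argument anchored at the largest zero of $w:=u-g_+^{-1}(\lambda+f)$ through the boundary condition $u(1)=0$, and then to carry the resulting lower bound to the remaining zeros; the nonautonomous term $f$ will enter only through corrections which are $o\bigl(\lambda g_+^{-1}(\lambda/2)\bigr)$, and this is what replaces, in the present setting, the increasing of $f$ used in \cite{rouaki1} or the uniqueness for the Cauchy problem used in \cite{rouaki2}.

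First I would set $E(t)=\tfrac12 u'(t)^2+G(u(t))-\lambda u(t)$, with $G(s)=\int_0^s g$, and write $\phi(s)=G(s)-\lambda s$. Multiplying the equation by $u'$ gives $E'(t)=-\tfrac{n-1}{t}u'(t)^2+f(t)u'(t)$. Let $\tau_1<\cdots<\tau_N$ (with $N\ge k$) be the zeros of $w$; for $\lambda$ large none of them is $0$ nor accumulates at $t=1$, since $w(0)\neq0$ and $w(1)=-g_+^{-1}(\lambda+f(1))\neq0$. At each zero $u(\tau_j)=g_+^{-1}(\lambda+f(\tau_j))$ lies in $[g_+^{-1}(\lambda-M),g_+^{-1}(\lambda+M)]$, an interval of length at most $M/g'(g_+^{-1}(\lambda-M))$ by the mean value theorem and the limit $1/g'(g_+^{-1}(\lambda+\mu))\to0$; since $|\phi'|\le M$ there, it follows that $|\phi(u(\tau_i))-\phi(u(\tau_j))|\le\varepsilon(\lambda):=2M^2/g'(g_+^{-1}(\lambda-M))\to0$ for all $i,j$. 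I would also record, using the concavity of $g_+^{-1}$ (the inequality $g_+^{-1}(b\lambda)<\gamma g_+^{-1}(a\lambda)$) together with condition (3), that $R(4(\lambda+M))=o\bigl(g_+^{-1}(\lambda/2)\bigr)$, so that both $\varepsilon(\lambda)$ and the a priori bound $\bigl|\int_a^b fu'\bigr|\le 6M\,R(4(\lambda+M))$ are $o\bigl(\lambda g_+^{-1}(\lambda/2)\bigr)$.

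Next I would obtain the seed estimate at the largest zero $\tau_N$ by integrating $E'$ on $[\tau_N,1]$ and using $u(1)=0$:
\[
\tfrac12 u'(\tau_N)^2=\tfrac12 u'(1)^2+\bigl[\phi(0)-\phi(u(\tau_N))\bigr]+\int_{\tau_N}^1\tfrac{n-1}{t}u'^2\,dt-\int_{\tau_N}^1 fu'\,dt .
\]
Here $\phi(0)-\phi(u(\tau_N))=\int_0^{u(\tau_N)}(\lambda-g)$; since $g_+$ is increasing and convex on $[A,\infty[$ and $g$ is bounded on $[0,A]$, one has $g(s)\le\tfrac13\lambda$ on $[0,\tfrac12 g_+^{-1}(\lambda/2)]$ for $\lambda$ large, while $g\le\lambda$ up to $g_+^{-1}(\lambda)$, which differs from $u(\tau_N)$ by at most $M/g'(g_+^{-1}(\lambda-M))$; a short computation then gives $\phi(0)-\phi(u(\tau_N))\ge\tfrac13\lambda g_+^{-1}(\lambda/2)-o\bigl(\lambda g_+^{-1}(\lambda/2)\bigr)$. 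As the first and third terms on the right of the displayed identity are nonnegative and $\bigl|\int_{\tau_N}^1 fu'\bigr|=o\bigl(\lambda g_+^{-1}(\lambda/2)\bigr)$, this yields $\tfrac12 u'(\tau_N)^2\ge\tfrac13\lambda g_+^{-1}(\lambda/2)-o\bigl(\lambda g_+^{-1}(\lambda/2)\bigr)$.

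Finally I would propagate the bound down to the $k$ largest zeros. Integrating $E'$ over a consecutive gap $[\tau_j,\tau_{j+1}]$ and using $|\phi(u(\tau_j))-\phi(u(\tau_{j+1}))|\le\varepsilon(\lambda)$, $\int_{\tau_j}^{\tau_{j+1}}\tfrac{n-1}{t}u'^2\ge0$, and $\bigl|\int_{\tau_j}^{\tau_{j+1}}fu'\bigr|\le 6M\,R(4(\lambda+M))$, one gets $\tfrac12 u'(\tau_j)^2\ge\tfrac12 u'(\tau_{j+1})^2-o\bigl(\lambda g_+^{-1}(\lambda/2)\bigr)$, with the $o$-term independent of the solution. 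Iterating at most $k-1$ times from $\tau_N$ shows that each of the $k$ largest zeros $\tau$ satisfies $\tfrac12 u'(\tau)^2\ge\tfrac13\lambda g_+^{-1}(\lambda/2)-k\cdot o\bigl(\lambda g_+^{-1}(\lambda/2)\bigr)$. Choosing $A_k$ (depending on $k$) so large that for $\lambda>A_k$ the accumulated $o$-term is $<\tfrac16\lambda g_+^{-1}(\lambda/2)$, one obtains $|u'(\tau)|>B\sqrt{\lambda g_+^{-1}(\lambda/2)}$ for the $k$ largest zeros, with a universal constant $B>0$ (one may take $B=1/\sqrt3$). The main obstacle is exactly this bookkeeping: one must check that every trace of $f$ — the $\phi$-discrepancy $\varepsilon(\lambda)$, each integral $\int fu'$, and the size $R(4(\lambda+M))$ — is negligible next to $\lambda g_+^{-1}(\lambda/2)$, which is precisely what the superlinearity, condition (3), the concavity of $g_+^{-1}$, and $1/g'(g_+^{-1}(\lambda+\mu))\to0$ provide, and that the error accumulated over the $k$ gaps stays controlled, which forces $A_k$ to depend on $k$. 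In the autonomous case of \cite{rouaki2} all these corrections vanish, so a single $A$ and a single $B$ suffice.
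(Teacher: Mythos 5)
Your proof is correct, and your recurrence step is essentially the paper's: you integrate the energy $E=\tfrac12 u'^2+G(u)-\lambda u$ over a gap $[\tau_j,\tau_{j+1}]$, observe that $\phi(u(\tau_{j+1}))-\phi(u(\tau_j))=\int_{u(\tau_j)}^{u(\tau_{j+1})}(g-\lambda)$ is $o(1)$ because both values lie in the short interval $[g_+^{-1}(\lambda-M),g_+^{-1}(\lambda+M)]$, and absorb $\int fu'$ via the a priori bound $6M\,R(4(\lambda+M))=o(\lambda g_+^{-1}(\lambda/2))$ --- exactly the paper's use of its formulas (5)--(8). Where you genuinely diverge is the seed estimate at the largest zero. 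The paper works on the interval where $u$ descends from $g_+^{-1}(\lambda/2)$ to its first zero $\eta$: it shows $u$ is there decreasing and convex, compares $v=h-u$ with the linear ODE $-v''>\frac{\lambda}{2g_+^{-1}(\lambda/2)}v$ via Sturm's theorem to get $\eta-a<\sqrt2\,\pi\sqrt{g_+^{-1}(\lambda/2)/\lambda}$, and then uses convexity to convert this length bound into $|u'(\tau_k)|>\frac{1}{\sqrt2\pi}\sqrt{\lambda g_+^{-1}(\lambda/2)}$. You instead run the same energy identity all the way to the boundary $t=1$ and bound the potential drop $\phi(0)-\phi(u(\tau_N))=\int_0^{u(\tau_N)}(\lambda-g)$ from below by $\tfrac13\lambda g_+^{-1}(\lambda/2)-o(\cdot)$, using convexity of $g_+$ to get $g\le\tfrac13\lambda$ on $[0,\tfrac12 g_+^{-1}(\lambda/2)]$ and the mean value estimate to control the sliver between $g_+^{-1}(\lambda)$ and $u(\tau_N)$. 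This is a legitimate alternative: it avoids the Sturm comparison and the monotonicity/convexity analysis of $u$ on $[\tau_k,\eta]$ entirely (and even yields a better constant before losses), at the price of a more careful computation of the potential term; the paper's route additionally produces the length bound on the descent interval, though that is not used elsewhere. Two small points to tidy: your intermediate claim $R(4(\lambda+M))=o\bigl(g_+^{-1}(\lambda/2)\bigr)$ is stronger than what the concavity inequality $g_+^{-1}(b\lambda)<\gamma g_+^{-1}(a\lambda)$ gives (it gives $O$, and for the $g_-^{-1}$ branch you need condition (3)); what you actually use, $R(4(\lambda+M))=o\bigl(\lambda g_+^{-1}(\lambda/2)\bigr)$, is the correct statement and is exactly what the paper asserts. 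Likewise the side remark that no zero sits at $t=0$ is neither justified (a nonnegative local minimum could touch $g_+^{-1}(\lambda+f)$) nor needed, since the singular term $\frac{n-1}{t}u'^2$ is integrable at $0$.
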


\begin{proof}
\emph{
The $k$ largest zeros of $u-g_+^{-1}(\lambda + f)$ 
are denoted by $\tau _{1}<\tau _{2}<..<\tau _{k}<1$. $\tau_k$ represents the largest zero.}

\emph{Estimation of $u'(\tau _{k})$.}

Let $\eta \in ]\tau _{k},1[$ be the smallest zero of $u(t).$ Since $%
g_+^{-1}(\lambda + f)>u,$ from remark(1) $u$ has no local maximum in 
$]\tau _{k},\eta [ $, then it is decreasing on $[\tau _{k},\eta ]$ and from (1) it is convex.

Let $a$ be the unique element of $]\tau _{k},\eta [ $ such that 
$u(a)=g_+^{-1}(\lambda /2)$. Denoting by $h(t)$ the segment joining $u(a)$
and $u(\eta )=0$, and setting \ $v(t)=h(t)-u(t)$ on \ $]a,\eta [ ,$ then \
$-v''=\lambda + f-g(u)- pu'.$ Since $u<g_+^{-1}(\lambda /2)$ 
and is decreasing \quad $-v''>\lambda /2$,
since $v<g_+^{-1}(\lambda /2)$
	\[	-v''>\frac{\lambda }{2g_+ ^{-1}(\lambda /2)}v,
		\textrm{ on }]a,\eta [	
	\]
	\[	v(\eta )=v(a)=0%
	\]
setting $t=s\left( \eta -a\right) +a, s\in [ 0,1]$ and 
$ w(s)=v(s \left( \eta -a \right) +a)$
	\[	-w''>\left( \eta -a\right) ^{2}\frac{\lambda }{2g_+^{-1}(\lambda /2)} w, \textrm{ on }]0,1[
	\]
	\[	w(0)=w(1)=0
	\]
the comparison theorem of Sturm gives \, $(\eta -a) <\sqrt{2} \pi \sqrt{g_+^{-1}(\lambda /2)\big/ \lambda }$.

Since $u$ is convex on $]\eta ,\tau _{k}[$, \, $\left| u'(\tau
_{k})\right| >\left| u'(a)\right| > u(a)\big/(\eta -a)$, \newline
hence $\left| u'(\tau _{k})\right| >\frac{1}{\sqrt{2}\pi } 
\sqrt{\lambda g_+^{-1}(\lambda /2)}$.

\emph{We shall use the recurrence argument.}

Let $B>0$, $\delta >0$, $\tau _{i}$
and $\tau _{i+1}$ two consecutive zeros such that $\left| u'(\tau
_{i+1})\right| >B\sqrt{\lambda g_+^{-1}(\lambda /2)},$ then for $\lambda $
large enough we have $\left| u'(\tau _{i})\right| >(B-\delta )\sqrt{%
\lambda g_+^{-1}(\lambda /2)}$. 

Indeed, multiplying (1) by $u'$ and integrating to get 
	\[	\frac{u'^{2}(\tau _{i})}{2} \ge \frac{u'^{2}(\tau _{i+1})}{2} + \!
		\int _{u(\tau _{i})}^{u(\tau _{i+1})} \!\!\!\! (g(u)-\lambda )du -
		\! \int_{\tau _{i}}^{\tau _{i+1}} \!\!\!\! f u'dt
	\]
then (3,5) give
	\[	\frac{u'^{2}(\tau _{i})}{2}\geq \frac{B^{2}}{2}\lambda
g_+^{-1}(\lambda /2)-M \left| \frac{ f(\tau _{i+1})- f(\tau
_{i})}{g^{\prime }(g_+^{-1}(\lambda ))}\right| - 6M\,R(4(\lambda +M))
	\]
(2,6) give $\frac{R(4(\lambda +M))}{\lambda g_+^{-1}(\lambda /2)}%
\to 0$ and (4) gives $\frac{ f(\tau _{i+1})- f(\tau
_{i})}{g'(g_+^{-1}(\lambda ))} \to 0$.
\end{proof}

\vspace{10pt}
The following proposition genaralizes the Proposition(4) in \cite{rouaki2}.
\begin{proposition}\label{p_classification}
There exists a sequence $(B_{k})(k\geq 0)$ of positive numbers such that,
for each $\lambda >B_{k}$, (1) has no solution $u \in \partial Z_{2k}(\lambda )$ satisfying $u'(0)\leq 0$.
\end{proposition}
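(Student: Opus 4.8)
The plan is to argue by contradiction: suppose that, for some $k\ge 0$ and arbitrarily large $\lambda$, there is a solution $u$ of (1) with $u'(0)\le 0$ lying in $\partial Z_{2k}(\lambda)$; put $\phi(t)=g_+^{-1}(\lambda+f(t))$ and $w=u-\phi$. First, a topological reduction, carried out as in \cite{rouaki2}: since $w(1)=-\phi(1)<0$ for $\lambda$ large, $w$ has no zero at $t=1$; and if $w$ had only simple zeros and none at $t=0$, then $u$ would lie in the relative interior of some $Z_p(\lambda)$, which is disjoint from $\partial Z_{2k}(\lambda)$ (it is the interior of $Z_{2k}(\lambda)$ if $p=2k$, and disjoint from $\overline{Z_{2k}(\lambda)}$ if $p\ne 2k$). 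Hence $w$ has a non-simple zero in $[0,1]$, or $w(0)=0$. Moreover, since $u\in\overline{Z_{2k}(\lambda)}$ and each sign change of $w$ persists for $C^1$-nearby functions, $w$ has at most $2k$ sign changes.

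The main case, and the one where the argument departs from \cite{rouaki2} by using the \emph{a priori} estimates instead of uniqueness for the Cauchy problem, is that $w$ has a non-simple zero; let $\tau_0$ be the \emph{largest} one, so $w(\tau_0)=w'(\tau_0)=0$. From $w'(\tau_0)=0$ and $\phi'(t)=f'(t)\big/g'\!\big(g_+^{-1}(\lambda+f(t))\big)$ we get $|u'(\tau_0)|=|\phi'(\tau_0)|\le M\big/g'\!\big(g_+^{-1}(\lambda+f(\tau_0))\big)$, which tends to $0$ as $\lambda\to\infty$ by (6) together with the convexity and superlinearity of $g_+$. On the other hand, if $r$ is the number of zeros of $w$ exceeding $\tau_0$, each such zero is simple, hence a sign change, so $r\le 2k$, and $\tau_0$ is the $(r+1)$-st largest zero of $w$; applying the Lemma with index $r+1\le 2k+1$ — legitimate once $\lambda>\max_{1\le j\le 2k+1}A_j$, since $w$ then has at least $r+1$ zeros — gives $|u'(\tau_0)|>B\sqrt{\lambda\,g_+^{-1}(\lambda/2)}$, whose right-hand side tends to $+\infty$ by (5). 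For $\lambda$ large these two bounds on $|u'(\tau_0)|$ are incompatible. The real obstacle is precisely this step: the Lemma controls $|u'|$ only at the $k$ largest zeros of $w$, so one must guarantee that the degenerate zero sits inside the controlled range — which is exactly what taking $\tau_0$ to be the \emph{largest} non-simple zero, together with the bound $r\le 2k$, accomplishes.

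It remains to treat the case $w(0)=0$ in which $w$ has no non-simple zero. As for every radial solution, $u'(0)=0$, so $|u'(0)|=0$; every zero of $w$ in $(0,1)$ is then simple, hence a sign change, so $w$ has at most $2k+1$ zeros in $[0,1]$, and the Lemma with that index forces $|u'(0)|>B\sqrt{\lambda\,g_+^{-1}(\lambda/2)}>0$, again a contradiction. Taking $B_k$ larger than $\max_{1\le j\le 2k+1}A_j$ and than the $\lambda$-thresholds obtained above produces the required sequence $(B_k)$.
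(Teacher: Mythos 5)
Your proof is correct and follows essentially the same route as the paper: the boundary condition forces a non-simple zero of $u-g_+^{-1}(\lambda+f)$ among its $2k+1$ largest zeros, and at that point the Lemma's lower bound $|u'(\tau)|>B\sqrt{\lambda\,g_+^{-1}(\lambda/2)}$ is incompatible with $|[g_+^{-1}(\lambda+f)]'|=|f'|/g'(g_+^{-1}(\lambda+f))\to 0$. The only cosmetic differences are that you fold the $k=0$ case and the endpoint $t=0$ into the general argument (the paper treats $k=0$ separately via Remark~1) and that you single out the largest non-simple zero rather than an arbitrary one of index at most $2k+1$.
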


\begin{proof}
\emph{By contradiction, let }$u\in \partial Z_{2k}(\lambda )$ 
\emph{be a solution of (1)}.

\emph{Case }$k=0$: Let $(v_{n})$ a sequence of solutions in 
$Z_{0}(\lambda )$ such that $v_{n}\to u$. From remark(1) 
$v_{n}<0$ on $]0,1[$ then $u\leq 0$, hence $u-g_+^{-1}(\lambda + f)$
has no zero from which $u\in Z_0(\lambda )$ thus $u\notin \partial Z_0(\lambda )$, contradiction.

\emph{Case }$k\geq 1$: \emph{First, we shall prove that }
$u-g_+^{-1}(\lambda + f)$ \emph{ has at most } $2k$ \emph{ simple
zeros}.

Indeed, let $\tau $ be a simple zero, then there exist $\epsilon _{0}>0$, 
$\epsilon _{1}>0$ and $\delta >0$ such that $\tau $ is the unique zero on 
$]\tau -\epsilon _{0},\tau +\epsilon _{0}[$, (one assume that 
$u-g_+^{-1}(\lambda + f)$ is increasing. If it is decreasing the
inequalities are inverse and the proof is similar)
	\[	\left| 
\begin{array}{l}
u(\tau -\epsilon _{0})-g_+^{-1}(\lambda + f(\tau -\epsilon
_{0}))<-\epsilon _{1} \\ 
u(\tau +\epsilon _{0})-g_+^{-1}(\lambda + f(\tau +\epsilon
_{0}))>\epsilon _{1} \\ 
u'-\left[ g_+^{-1}(\lambda + f)\right] ^{\prime }>\delta%
\end{array}
\right. 
	\]
Let $(v_{n})$ be a sequence of $Z_{2k}(\lambda )$ such that $%
v_{n}\to u$ in $E,$ there exists $n(\epsilon _{0},\epsilon
_{1},\delta )\in N$ such that for $n>n(\epsilon _{0},\epsilon
_{1},\delta )$ 
	\[	\left| 
\begin{array}{l}
\left| u(\tau -\epsilon _{0})-v_{n}(\tau -\epsilon _{0})\right| <\epsilon _1 /2 \\ 
\left| u(\tau +\epsilon _{0})-v_{n}(\tau +\epsilon _{0})\right| <\epsilon _1 /2 \\ 
\left\| u'-v'_{n}\right\| _{\infty }< \delta / 2
\end{array}
\right. 
	\]
from which 
	\[	\left| 
\begin{array}{l}
v_{n}(\tau -\epsilon _{0})-g_+^{-1}(\lambda + f(\tau -\epsilon
_{0}))< -\epsilon _1 /2 \\ 
v_{n}(\tau +\epsilon _{0})-g_+^{-1}(\lambda + f(\tau +\epsilon
_{0}))> \epsilon _1 /2 \\ 
v'_{n}-\left[ g_+^{-1}(\lambda + f)\right] ^{\prime }> \delta /2
\end{array}
\right.
	\]
which implies that $v_{n}-g_+^{-1}(\lambda + f)$ has a unique simple
zero on $]\tau -\epsilon _{0},\tau +\epsilon _{0}[$. Since $v_{n}\in
Z_{2k}(\lambda )$, $v_{n}-g_+^{-1}(\lambda + f)$ has exactly $2k$
simple zeros, then $u-g_+^{-1}(\lambda + f)$ has at most $2k$ simple
zeros.

\emph{There are not exactly $m$ simple zeros with $m<2k$}.

Indeed, by contradiction assume that $u\in Z_{m}(\lambda )$. Since 
$Z_{2k}(\lambda )$ and $Z_{m}(\lambda )$ are open sets of $E$ and 
$Z_{m}(\lambda )\cap Z_{2k}(\lambda )\neq \emptyset $, then $Z_{m}(\lambda
)\cap \partial Z_{2k}(\lambda )=\emptyset $, contradiction.

Last, since there exist at most $2k$ simple zeros of $u-g_+^{-1}(\lambda
+ f)$ there exists $\tau _{j}$ a zero which is not simple $j\leq
2k+1$. From the lemma (2), there exists $A_{2k+1}>0$ such that for $\lambda
>A_{2k+1}$ \, $|u'(\tau _{j})| >B \sqrt{\lambda g_+^{-1}(\lambda /2)}$

On the other hand \, $[ g_+^{-1}(\lambda +
f)]' = \frac{f'}{g'_+(g_+^{-1}(\lambda +f))},$  (4) implies that 
$|u'(\tau _j)| > \left| \big(g_+^{-1}(\lambda + f(\tau  _i))\big) ' \right|$
for $\lambda $ large enough, then $\tau _j$ is a simple zero of \
$u-g_+^{-1}(\lambda + f)$, contradiction.
\end{proof}

\end{document}